\def\namedlabel#1#2{\begingroup
    #2%
    \def\@currentlabel{#2}%
    \phantomsection\label{#1}\endgroup
}
\newcommand\Z{\mathbb{Z}}
\newcommand\Q{\mathbb{Q}}
\newcommand\F{\mathbb{F}}
\newcommand{\md}{\textup{-mod}}
\newcommand\pr[1]{\prescript{r}{}{#1}}
\newcommand\pl[1]{\prescript{\ell}{}{#1}}
\DeclareMathOperator{\HOM}{HOM}
\DeclareMathOperator{\tr}{tr}
\DeclareMathOperator{\id}{id}
\newtheorem{theo}{Theorem}[section]
\newtheorem{prop}[theo]{Proposition}
\newtheorem{lem}[theo]{Lemma}
\newtheorem{cor}[theo]{Corollary}
\theoremstyle{definition}
\newtheorem{defin}[theo]{Definition}
\newtheorem{rem}[theo]{Remark}
\newtheorem{eg}[theo]{Example}
\numberwithin{equation}{section}
  \newcommand{\acomments}[1]{
    \ \\
    {\color{red}
      \textbf{AS:} #1
    }
    \ \\
    }
  \newcommand{\mcomments}[1]{
    \ \\
    {\color{red}
      \textbf{MM:} #1
    }
    \ \\
    }
  \newcommand{\acomments}[1]{}
  \newcommand{\mcomments}[1]{}
  \newcommand{\details}[1]{
      \ \\
      {\color{OliveGreen}
        \textbf{Details:} #1
      }
      \\
  }
  \newcommand{\details}[1]{}
  \newcommand{\prelim}{\textsc{Preliminary version} \bigskip}
  \newcommand{\prelim}{}
\title{Nested Frobenius extensions of graded superrings}
\author{Edward Poon}
\address{E.~Poon: Department of Mathematics and Statistics, University of Ottawa}
\email{epoon061@uottawa.ca}
\author{Alistair Savage}
\address{A.~Savage: Department of Mathematics and Statistics, University of Ottawa}
\urladdr{\url{http://alistairsavage.ca}}
\email{alistair.savage@uottawa.ca}
\begin{document}

\begin{abstract}
  We prove a nesting phenomenon for twisted Frobenius extensions.  Namely, suppose $R \subseteq B \subseteq A$ are graded superrings such that $A$ and $B$ are both twisted Frobenius extensions of $R$, $R$ is contained in the center of $A$, and $A$ is projective over $B$.  Our main result is that, under these assumptions, $A$ is a twisted Frobenius extension of $B$.  This generalizes a result of Pike and the second author, which considered the case where $R$ is a field.
\end{abstract}

\subjclass[2010]{17A70, 16W50.}
\keywords{Frobenius extension, Frobenius algebra, graded superring, graded superalgebra}

\prelim

\maketitle
\thispagestyle{empty}

\tableofcontents

%
\section{Introduction}
%

Frobenius extensions, which are a natural generalization of Frobenius algebras, appear frequently in many areas of mathematics, from topological quantum field theory to categorical representation theory.  Several generalizations of Frobenius extensions have been introduced since their inception.  In particular, Nakayama and Tsuzuku introduced Frobenius extensions of the second kind in \cite{NT60}.  These were further generalized to the concept of $(\alpha,\beta)$-Frobenius extensions in \cite{Mor65}, where $\alpha$ and $\beta$ are automorphisms of the rings involved.  The corresponding theory for graded superrings was then developed in \cite{PS16}, where they were called \emph{twisted Frobenius extensions}.

In the literature, one finds that many examples of (twisted) Frobenius extensions arise from certain types of subobjects.  For instance, if $H$ is a finite index subgroup of $G$, then the group ring $R[G]$ is a Frobenius extension of $R[H]$, where $R$ is a commutative base ring.  This example dates back to the original paper \cite{Kas54} on Frobenius extensions.  Another example comes from the theory of Hopf algebras.  In particular, it was shown in \cite[Cor.~3.6(1)]{Sch92} that if $K$ is a Hopf subalgebra of $H$, then $H$ is a Frobenius extension of $K$ of the second kind.  Yet another example comes from Frobenius algebras themselves.  Namely, it was shown (in the more general graded super setting) in \cite[Cor.~7.4]{PS16} that if $A$ is a Frobenius algebra over a field, $B$ is a subalgebra of $A$ that is also a Frobenius algebra, and $A$ is projective over $B$, then $A$ is a twisted Frobenius extension of $B$.

The goal of the current paper is to shed more light on this ``nesting'' phenomenon.  Namely, we consider the situation where we have graded superrings $R \subseteq B \subseteq A$, such that $A$ and $B$ are both twisted Frobenius extensions of $R$, and $R$ is contained in the center of $A$.  We call these \emph{nested Frobenius extensions}.  We first prove that these assumptions imply $A$ and $B$ are \emph{untwisted} Frobenius extensions of $R$ (see Corollary~\ref{cor:R-central-implies-untwisted}).  Then, our main result (Theorem~\ref{theo:main-theorem}) is that, provided $A$ is projective over $B$, it follows that $A$ is a twisted Frobenius extension of $B$.  The twisting is given in terms of the Nakayama automorphisms of $A$ and $B$.  In particular, even though $A$ and $B$ are untwisted Frobenius extensions of $R$, $A$ can be a nontrivially twisted Frobenius extension of $B$.  This result can be viewed as a generalization of \cite[Cor.~7.4]{PS16} to the setting of arbitrary supercommutative ground rings.

The organization of the paper is as follows.  We begin in Section~\ref{sec:background} by recalling the definition of twisted Frobenius extensions of graded superrings, together with some related results.   In Section~\ref{sec:nestings}, we examine nested Frobenius extensions $R \subseteq B \subseteq A$ where $R$ is contained in the center of $A$.  We begin by proving that $A$ and $B$ are, in fact, \emph{untwisted} Frobenius extensions of $R$ (Corollary~\ref{cor:R-central-implies-untwisted}).  Then, after establishing several important lemmas, we prove our main result (Theorem~\ref{theo:main-theorem}), that $A$ is a twisted Frobenius extension of $B$, provided $A$ is projective over $B$.  We conclude in Section~\ref{sec:applications} with several applications of our main result.  In particular, we explain how the aforementioned examples of group rings and Hopf algebras can be deduced from our main theorem.  We also give an example arising from nilcoxeter rings.

\subsection*{Acknowledgements}

This paper is the result of a research project completed in the context of an Undergraduate Student Research Award from the Natural Sciences and Engineering Research Council of Canada (NSERC), received by the first author.  The second author was supported by an NSERC Discovery Grant.

\iftoggle{detailsnote}{
  \subsection*{Note on the arXiv version} For the interested reader, the tex file of the arXiv version of this paper includes hidden details of some straightforward computations and arguments that are omitted in the pdf.  These details can be displayed by switching the \texttt{details} toggle to true in the tex file and recompiling.
}{}

%
\section{Twisted Frobenius extensions} \label{sec:background}
%

In this section we recall the definition of twisted Frobenius extensions, together with some of their properties that will be used in this paper.  We refer the reader to \cite{PS16} for further details.

We fix an abelian group $\Lambda$ and by \emph{graded}, we mean $\Lambda$-graded.  In particular, a graded superring is a $\Lambda \times \Z_2$-graded ring.  In other words, if $A$ is a graded superring, then
\[
  A = \bigoplus_{\lambda \in \Lambda,\, \pi \in \Z_2} A_{\lambda,\pi},\quad A_{\lambda,\pi} A_{\lambda',\pi'} \subseteq A_{\lambda+\lambda',\pi+\pi'},\quad \lambda,\lambda' \in \Lambda,\, \pi, \pi' \in \Z_2.
\]
We denote the multiplicative unit of $A$ by $1_A$.  To avoid repeated use of the modifiers ``graded'' and ``super'', from now on we will use the term \emph{ring} to mean graded superring and \emph{subring} to mean graded sub-superring.  Similarly, by an automorphism of a ring, we mean an automorphism as graded superrings (homogeneous of degree zero).

We will use the term \emph{module} to mean graded supermodule.  In particular, a left $A$-module $M$ is a $\Lambda \times \Z_2$-graded abelian group with a left $A$-action such that
\[
  A_{\lambda,\pi} M_{\lambda',\pi'} \subseteq M_{\lambda+\lambda',\pi+\pi'}, \quad \lambda,\lambda' \in \Lambda,\ \pi,\pi' \in \Z_2,
\]
and similarly for right modules.  If $v$ is a homogeneous element in a ring or module, we will denote by $|v|$ its $\Lambda$-degree and by $\bar v$ its $\Z_2$-degree.  Whenever we write an expression involving degrees of elements, we will implicitly assume that such elements are homogeneous.

For $M$, $N$ two $\Lambda \times \Z_2$-graded abelian groups, we define a $\Lambda \times \Z_2$-grading on the space $\HOM_\Z(M,N)$ of all $\Z$-linear maps by setting $\HOM_\Z(M,N)_{\lambda,\pi}$, $\lambda \in \Lambda$, $\pi \in \Z_2$, to be the subspace of all homogeneous maps of degree $(\lambda,\pi)$. That is,
\[
  \HOM_\Z(M,N)_{\lambda,\pi} = \{f \in \HOM_\Z(M,N) \mid  f(M_{\lambda',\pi'}) \subseteq N_{\lambda+\lambda',\pi+\pi'} \ \forall\ \lambda' \in \Lambda,\ \pi' \in \Z_2\}.
\]

For $A$-modules $M$ and $N$, we define the $\Lambda \times \Z_2$-graded abelian group
\[
  \HOM_A(M,N) = \bigoplus_{\lambda \in \Lambda,\, \pi \in \Z_2} \HOM_A(M,N)_{\lambda,\pi},
\]
where the homogeneous components are defined by
\[
  \HOM_A(M,N)_{\lambda,\pi} = \{f \in \HOM_\Z(M,N)_{\lambda,\pi} \mid f(am) = (-1)^{\pi \bar a} a f(m) \ \forall\ a \in A,\ m \in M\}.
\]
We let $A\md$ denote the category of left $A$-modules, with set of morphisms from $M$ to $N$ given by $\HOM_A(M,N)_{0,0}$.  Similarly, we have the category of right $A$-modules with morphisms from $M$ to $N$ given by
\[
  \{ f \in \HOM_\Z(M,N)_{0,0} \mid f(ma) = f(m)a \ \forall\ m \in M,\ a \in A\}.
\]
We will call elements of $\HOM_A(M,N)_{\lambda,\pi}$ \emph{homomorphisms of degree $(\lambda,\pi)$} and, if they are invertible, \emph{isomorphisms of degree $(\lambda,\pi)$}.  Note that they are not morphisms in the category $A\md$ unless they are of degree $(0,0)$.  We use similar terminology for right modules.

If $M$ is a left $A$-module, we let $\pl{a}$ denote the operator given by the left action by $a$, that is,
\begin{equation} \label{eq:left-mult-action}
  \pl{a}(m) = am ,\quad a \in A,\ m \in M.
\end{equation}
If $M$ is a right $A$-module, then for each homogeneous $a \in A$, we define a $\Z$-linear operator
\begin{equation} \label{eq:sign-right-action}
  \pr{a} \colon M\to M,\quad \pr{a}(m) = (-1)^{\bar a \bar m} ma, \quad a \in A,\ m \in M.
\end{equation}

If $A_1$ and $A_2$ are rings, then, by definition, an $(A_1,A_2)$-bimodule $M$ is both a left $A_1$-module and a right $A_2$-module, such that the left and right actions commute:
\[
  (a_1m)a_2 = a_1(ma_2) \quad \text{for all } a_1 \in A_1,\ a_2 \in A_2,\ m \in M.
\]
If $M$ is an $(A_1,A_2)$-bimodule and $N$ is a left $A_1$-module, then $\HOM_{A_1}(M,N)$ is a left $A_2$-module via the action
\begin{equation} \label{eq:left-hom-action}
  a \cdot f = (-1)^{\bar a \bar f} f \circ \pr{a},\quad a \in A_2,\ f \in \HOM_{A_1}(M,N),
\end{equation}
and $\HOM_{A_1}(N,M)$ is a right $A_2$-module via the action
\begin{equation} \label{eq:right-hom-action}
  f \cdot a = (-1)^{\bar a \bar f} (\pr{a}) \circ f,\quad a \in A_2,\ f \in \HOM_{A_1}(N,M).
\end{equation}

For $\lambda \in \Lambda$, $\pi \in \Z_2$, and an $A$-module $M$, we let $\{\lambda,\pi\}M$ denote the $\Lambda \times \Z_2$-graded abelian group that has the same underlying abelian group as $M$, but a new grading given by $(\{\lambda,\pi\}M)_{\lambda',\pi'}=M_{\lambda'-\lambda,\pi'-\pi}$.  Abusing notation, we will also sometimes use $\{\lambda,\pi\}$ to denote the map $M \to \{\lambda,\pi\}M$ that is the identity on elements of $M$.  We define a left action of $A$ on $\{\lambda,\pi\}M$ by $a \cdot \{\lambda,\pi\} m= (-1)^{\pi \bar a} \{\lambda,\pi\}am$.  In this way, $\{\lambda,\pi\}$ defines a functor from the category of $A$-modules to itself that leaves morphisms unchanged.

Suppose $M$ is a left $A$-module, $N$ is a right $A$-module, and $\alpha$ is a ring automorphism of $A$.  Then we can define the twisted left $A$-module ${}^\alpha M$ and twisted right $A$-module $N^\alpha$ to be equal to $M$ and $N$, respectively, as graded abelian groups, but with actions given by
\begin{gather}
  \label{eq:left-twist} a \cdot m = \alpha(a) m,\quad a \in A,\ m \in {}^\alpha{M}, \\
  \label{eq:right-twist} n \cdot a = n \alpha(a),\quad a \in A,\ n \in N^\alpha,
\end{gather}
where juxtaposition denotes the original action of $A$ on $M$ and $N$.  If $\alpha$ is a ring automorphism of $A$, and $B$ is a subring of $A$, then we will also use the notation ${}^\alpha_B A_A$ to denote the $(B,A)$-bimodule equal to $A$ as a graded abelian group, with right action given by multiplication, and with left action given by $b \cdot a = \alpha(b)a$ (where here juxtaposition is multiplication in the ring $A$), even though $\alpha$ is not necessarily a ring automorphism of $B$.  We use ${}_A A^\alpha_B$ for the obvious right analogue.  By convention, when we consider twisted modules as above, operators such as $\pr{a}$ and $\pl{a}$ defined in \eqref{eq:left-mult-action} and \eqref{eq:sign-right-action} involve the right and left action (respectively) in the \emph{original} (i.e.\ untwisted) module.

\begin{defin}[Twisted Frobenius extension]
  Suppose $B$ is a subring of a ring $A$, that $\alpha$ is a ring automorphism of $A$, and that $\beta$ is a ring automorphism of $B$.  Furthermore, suppose $\lambda \in \Lambda$ and $\pi \in \Z_2$.  We call $A$ an \emph{$(\alpha,\beta)$-Frobenius extension} of $B$ of \emph{degree} $(-\lambda,\pi)$ if $A$ is finitely generated and projective as a left $B$-module, and there is a morphism of $(B,B)$-bimodules
  \[
    \tr \colon {}^\beta_B A_B^\alpha \to \{\lambda,\pi\} {}_B B_B
  \]
  satisfying the following two conditions:
  \begin{description}[style=multiline, labelwidth=0.7cm]
    \item[\namedlabel{T1}{T1}] If $\tr(Aa) = 0$ for some $a \in A$, then $a=0$.
    \item[\namedlabel{T2}{T2}] For every $\varphi \in \HOM_B \left( {}^\beta_B A, \left\{ \lambda, \pi \right\} {}_B B\right)$, there exists an $a \in A$ such that $\varphi = \tr \circ \pr{a}$.
  \end{description}
  The map $\tr$ is called a \emph{trace map}.  We will often view it as a map ${}^\beta_B A_A^\alpha \to {}_B B_B$ that is homogeneous of degree $(-\lambda,\pi)$.  If $A$ is an $(\alpha,\beta)$-Frobenius extension of $B$ for some $\alpha$ and $\beta$, we say that $A$ is a \emph{twisted Frobenius extension} of $B$.  If $A$ is an $(\id_A,\id_B)$-Frobenius extension of $B$, we call it a \emph{Frobenius extension} or \emph{untwisted Frobenius extension} (when we wish to emphasize that the twistings are trivial).
\end{defin}

\begin{rem}
  We say the extension is of degree $(-\lambda,\pi)$ since that is the degree of the trace map.  If $A$ and $B$ are concentrated in degree $(0,0)$, then $(\alpha,\beta)$-Frobenius extensions were defined in \cite[p.~41]{Mor65}.  In particular, an $(\id_A,\beta)$-Frobenius extension is sometimes called a \emph{$\beta^{-1}$-extension}, or a \emph{Frobenius extension of the second kind}; see \cite{NT60}.
\end{rem}

If $B$ is a subring of a ring $A$, then we define the \emph{centralizer} of $B$ in $A$ to be the subring of $A$ given by
\begin{equation}
  C_A(B) = \{a \in A \mid ab = (-1)^{\bar a \bar b} ba \text{ for all } b \in B\}.
\end{equation}

If $A$ is an $(\alpha,\beta)$-Frobenius extension of $B$, then we have the associated \emph{Nakayama isomorphism} (an isomorphism of rings)
\[
  \psi \colon C_A(B) \to C_A(\alpha(B)).
\]
which is the unique map satisfying
\begin{equation}
  \tr(c a) = (-1)^{\bar a \bar c} \tr \left( a \psi(c) \right)
  \quad \text{for all } a \in A,\ c \in C_A(B).
\end{equation}

\begin{prop} \label{prop:dual-sets-of-generators}
  The ring $B$ is an untwisted Frobenius extension of $R$ of degree $(-\lambda,\pi)$ if and only if there exists a homomorphism of $(R,R)$-bimodules $\tr \colon B \to R$ of degree $(-\lambda,\pi)$, and finite subsets $\{x_1,\dotsc, x_n\}$, $\{y_1,\dotsc,y_n\}$ of $B$ such that $(|y_i|, \bar y_i) = (\lambda - |x_i|, \pi - \bar x_i)$ for $i=1,\dotsc,n$, and
  \begin{equation} \label{eq:dual-bases}
    b = (-1)^{\pi \bar b} \sum_{i=1}^n (-1)^{\pi \bar x_i} \tr(by_i) x_i
    = \sum_{i=1}^n y_i \tr(x_i b) \quad \text{for all } b \in B.
  \end{equation}
  We call the sets $\{x_1,\dotsc,x_n\}$ and $\{y_1,\dotsc,y_n\}$ \emph{dual sets of generators} of $B$ over $R$.
\end{prop}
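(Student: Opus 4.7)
My strategy is to prove both implications using the dual basis lemma for finitely generated projective modules, together with conditions \ref{T1} and \ref{T2} for the trace map.

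\emph{$(\Rightarrow)$ direction.} Assume $B$ is an untwisted Frobenius extension of $R$ of degree $(-\lambda,\pi)$ with trace map $\tr$. Since $B$ is finitely generated projective as a left $R$-module, the dual basis lemma produces homogeneous $x_1,\dots,x_n \in B$ and left-$R$-linear maps $f_1,\dots,f_n \in \HOM_R(B,R)$ (of compatible degrees) such that $b$ can be recovered as a signed sum of $f_i(b)x_i$. Applying \ref{T2} to each $f_i$ yields $y_i \in B$ with $f_i = \tr \circ \pr{y_i}$, and comparing $\Lambda \times \Z_2$-degrees forces $(|y_i|,\bar y_i) = (\lambda - |x_i|, \pi - \bar x_i)$. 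Substituting $f_i = \tr \circ \pr{y_i}$ into the dual basis expansion and unwinding the sign in $\pr{y_i}$ gives the first equation of \eqref{eq:dual-bases}. For the second equation, set $c := b - \sum_i y_i \tr(x_i b)$ and compute $\tr(b'c)$ for arbitrary $b' \in B$: substituting the first formula of \eqref{eq:dual-bases} for $b'$ into $\tr(b'b)$ and pulling the scalar $\tr(b'y_i) \in R$ through $\tr$ via the bimodule property, the accumulated signs collapse (using the parity relation $\bar x_i + \bar y_i \equiv \pi$) to yield $\tr(b'b) = \sum_i \tr(b'y_i)\tr(x_i b)$. Hence $\tr(Bc) = 0$, and \ref{T1} forces $c = 0$.

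\emph{$(\Leftarrow)$ direction.} Suppose the trace map and dual generators exist. The first formula of \eqref{eq:dual-bases} exhibits the $x_i$ together with the left-$R$-linear functionals $b \mapsto (-1)^{\pi\bar b + \pi\bar x_i}\tr(by_i)$ as a dual basis, so $B$ is finitely generated and projective as a left $R$-module. Condition \ref{T1} is immediate from the second formula: if $\tr(Ba)=0$, then in particular $\tr(x_i a) = 0$ for every $i$, so $a = \sum_i y_i \tr(x_i a) = 0$. For \ref{T2}, given $\varphi \in \HOM_R(B,R)$ of the appropriate degree, set $a := \sum_i y_i \varphi(x_i) \in B$; expanding $\tr(ba) = \sum_i \tr(by_i)\varphi(x_i)$ via the bimodule property of $\tr$ and expanding $\varphi(b)$ via the first formula of \eqref{eq:dual-bases} and the $R$-linearity of $\varphi$ produces two expressions that agree up to the sign $(-1)^{\bar a\bar b}$, establishing $\varphi = \tr \circ \pr{a}$.

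\emph{Main obstacle.} The principal technical task is super sign-chasing. Pulling an element of $R$ through a map of $\Z_2$-degree $\pi$ introduces a sign $(-1)^{\pi\bar r}$, and ensuring these signs cancel in the $c=0$ computation requires the parity relation $\bar y_i \equiv \pi + \bar x_i \pmod{2}$ together with $\pi^2 \equiv \pi \pmod{2}$. The most delicate steps are therefore the verification that the two formulas of \eqref{eq:dual-bases} are genuinely equivalent given only \ref{T1}, and the confirmation that the element $a$ constructed in the $(\Leftarrow)$ direction has the correct $\Lambda \times \Z_2$-degree so that $\tr \circ \pr{a}$ and $\varphi$ land in the same homogeneous component.
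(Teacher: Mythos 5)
Your proposal takes a genuinely different route from the paper: the paper's entire proof is a one-line citation of \cite[Prop.~4.9]{PS16}, specialized to trivial twistings, whereas you reprove the characterization from scratch via the dual basis lemma together with \ref{T1} and \ref{T2}. Your argument is the standard one (and is essentially how the cited result must be proved), so what the paper buys in brevity your version buys in self-containedness; the overall structure of both directions --- dual basis lemma plus \ref{T2} to produce the $y_i$, then \ref{T1} to pass between the two formulas in \eqref{eq:dual-bases}; and conversely projectivity from the first formula, \ref{T1} from the second, and $a=\sum_i y_i\varphi(x_i)$ for \ref{T2} --- is sound.

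One concrete caution on the forward direction, since it is the one place where your sketch, read literally, produces the wrong signs. If the dual basis expansion is $b=\sum_i f_i(b)x_i$ with $f_i$ homogeneous in the paper's convention for $\HOM_R(B,R)$ (so $f_i(rb)=(-1)^{\bar f_i \bar r}rf_i(b)$), then substituting $f_i=\tr\circ\pr{y_i}$ yields $b=\sum_i(-1)^{\bar y_i \bar b}\tr(by_i)x_i$. Since $\bar y_i=\pi+\bar x_i$, this differs term-by-term from the first formula of \eqref{eq:dual-bases} by $(-1)^{\bar x_i(\bar b+\pi)}$, which is not identically $1$, and this discrepancy cannot be absorbed into a rescaling of the $y_i$ because it depends on $\bar b$. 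The resolution is that the functionals actually appearing in \eqref{eq:dual-bases}, namely $b\mapsto(-1)^{\pi\bar b+\pi\bar x_i}\tr(by_i)$, are $R$-linear in the \emph{unsigned} sense ($f_i(rb)=rf_i(b)$), so one must convert between unsigned linearity and the signed convention of $\HOM_R$ (via $g\mapsto\bigl(b\mapsto(-1)^{\bar g\bar b}g(b)\bigr)$) before invoking \ref{T2}. This is precisely the bookkeeping you flag as the ``main obstacle,'' and it does close, but the substitution step as written is not yet correct and needs this conversion made explicit. The backward direction as you describe it is fine.
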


\begin{proof}
  This is a special case of \cite[Prop.~4.9]{PS16}, where the twistings are trivial.
\end{proof}

%
\section{Nested Frobenius extensions} \label{sec:nestings}
%

In this section, we introduce our main object of study, nested Frobenius extensions, and prove our main result (Theorem~\ref{theo:main-theorem}).  We begin with a simplification result.

\begin{lem} \label{lem:AR-identities}
  Suppose $A$ is an $(\alpha,\beta)$-Frobenius extension of $R$ of degree $(-\lambda,\pi)$, with trace map $\tr$ and Nakayama isomorphism $\psi$.  Furthermore, suppose $C_A(R) = A$.  Then $\alpha|_R = \beta$ and $\psi|_R = \id_R$.
\end{lem}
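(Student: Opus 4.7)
The plan is to unpack the bimodule structure of $\tr$ into explicit formulas, exploit the super-commutativity $ra = (-1)^{\bar r \bar a}\,ar$ coming from $C_A(R) = A$, and then twice invoke condition \ref{T1} to convert annihilation identities into equalities in $R$.

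Unpacking what it means for $\tr \colon {}^\beta_R A^\alpha_R \to {}_R R_R$ to be an $(R,R)$-bimodule morphism of degree $(-\lambda,\pi)$ gives the two identities
\[
  \tr(ra) = (-1)^{\pi \bar r}\,\beta^{-1}(r)\,\tr(a), \qquad \tr(ar) = \tr(a)\,\alpha^{-1}(r),
\]
for all $r \in R$ and $a \in A$. Using $C_A(R) = A$ I would super-commute $r$ past $a$ on the left-hand side of the first identity, and then super-commute $\alpha^{-1}(r)$ past $\tr(a)$ inside $R$ (noting that $\tr(a)$ has $\Z_2$-parity $\bar a + \pi$). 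All Koszul signs cancel, leaving $\beta^{-1}(r)\,\tr(a) = \alpha^{-1}(r)\,\tr(a)$ for all $r \in R$ and $a \in A$. Setting $s = \beta^{-1}(r) - \alpha^{-1}(r) \in R$, super-commuting once more to get $\tr(a)\,s = 0$, and applying the second identity in the form $\tr(a)\,s = \tr(a\,\alpha(s))$ yields $\tr(A\,\alpha(s)) = 0$. Condition \ref{T1} then forces $\alpha(s) = 0$, hence $s = 0$; this gives $\alpha|_R = \beta$.

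For the Nakayama statement, I would substitute $c = r \in R \subseteq C_A(R) = A$ into the defining relation $\tr(ca) = (-1)^{\bar a \bar c}\,\tr(a\,\psi(c))$ and combine it with the super-commutativity identity $\tr(ra) = (-1)^{\bar r \bar a}\,\tr(ar)$. The two signs match and cancel immediately, leaving $\tr(a\,(r - \psi(r))) = 0$ for all $a \in A$, and a second application of \ref{T1} delivers $\psi(r) = r$, proving $\psi|_R = \id_R$. (In fact this second argument is independent of the first, so the two claims could be proved in either order.) The only real subtlety throughout is the bookkeeping of Koszul signs in the first step; the substantive content is the two parallel uses of \ref{T1}, each converting ``$\tr(A)$ annihilates $x$'' into $x = 0$.
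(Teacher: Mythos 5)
Your proposal is correct and follows essentially the same route as the paper: both claims come from comparing the left and right $(R,R)$-bimodule identities for $\tr$ (resp.\ the defining relation of $\psi$) after super-commuting $r$ past $a$ using $C_A(R)=A$, and then applying \ref{T1}; your intermediate element $s=\beta^{-1}(r)-\alpha^{-1}(r)$ is just the image under $\alpha^{-1}$ of the paper's $r-\beta(\alpha^{-1}(r))$. The sign bookkeeping checks out, so the only difference is cosmetic.
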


\begin{proof}
  For all $r \in R$ and $a \in \prescript{\beta}{R}A^\alpha_R$, we have
  \begin{multline*}
    \tr(ar)
    = \tr(a) \alpha^{-1}(r)
    = (-1)^{\bar r (\pi + \bar a)} \alpha^{-1}(r) \tr(a) \\
    = (-1)^{\bar r \bar a} \tr \left( \beta(\alpha^{-1}(r)) a \right)
    = \tr \left( a \beta(\alpha^{-1}(r)) \right),
  \end{multline*}
  where the second and fourth equalities follow from the fact that $C_A(R)=A$.  Since the trace map is linear, this implies
  \[
    \tr \left( a \left( r - \beta(\alpha^{-1}(r)) \right) \right) = 0
    \quad \text{for all } a \in  {}^\beta_R A^\alpha_R.
  \]
  By \ref{T1}, we have $\beta(\alpha^{-1}(r)) = r$ for all $r \in R$.  It follows that $\alpha|_R = \beta$.

  Similarly, for all $r \in R$ and $a \in \prescript{\beta}{R}A^\alpha_R$, we have
  \[
    \tr(ar)
    = (-1)^{\bar r \bar a} \tr(ra)
    = \tr(a \psi(r)),
  \]
  and so $\psi|_R = \id_R$ by \ref{T1}.
\end{proof}

\begin{cor} \label{cor:R-central-implies-untwisted}
  If $A$ is a twisted Frobenius extension of $R$ and $C_A(R) = A$, then $A$ is in fact an \emph{untwisted} Frobenius extension of $R$ of the same degree.
\end{cor}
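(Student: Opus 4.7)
The plan is to produce an untwisted trace map by post-composing the given one with $\alpha$. The crucial input is Lemma~\ref{lem:AR-identities}: the hypothesis $C_A(R) = A$ forces $\alpha|_R = \beta$, so the two twistings agree on $R$ and can be stripped off simultaneously.

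Accordingly, I would set $\tr' := \alpha \circ \tr \colon A \to R$. Since $\alpha$ is a ring automorphism of degree $(0,0)$, the map $\tr'$ has the same degree $(-\lambda, \pi)$ as $\tr$, and the underlying left $R$-module structure on $A$ (which is finitely generated and projective) is unchanged. So only the bimodule morphism property and conditions \ref{T1} and \ref{T2} need to be verified.

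The first check is that $\tr'$ is a morphism of untwisted $(R,R)$-bimodules $A \to \{\lambda, \pi\} R$. Unwinding that $\tr$ is a morphism ${}^\beta_R A^\alpha_R \to \{\lambda, \pi\} R$ gives identities of the form $\tr(ra) = (-1)^{\pi \bar r} \beta^{-1}(r) \tr(a)$ and $\tr(ar) = \tr(a) \alpha^{-1}(r)$. Applying $\alpha$ to both sides and invoking $\alpha \beta^{-1}|_R = \id_R$ (which is equivalent to $\alpha|_R = \beta$) yields the desired untwisted relations $\tr'(ra) = (-1)^{\pi \bar r} r \tr'(a)$ and $\tr'(ar) = \tr'(a) r$.

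Conditions \ref{T1} and \ref{T2} follow easily. Condition \ref{T1} is immediate from \ref{T1} for $\tr$ and the invertibility of $\alpha$. For \ref{T2}, given $\varphi \in \HOM_R(A, \{\lambda,\pi\} R)$, I would consider $\alpha^{-1} \circ \varphi$; once again using $\alpha|_R = \beta$, this map lies in $\HOM_R({}^\beta_R A, \{\lambda,\pi\} R)$, so \ref{T2} for $\tr$ produces an $a \in A$ with $\alpha^{-1} \circ \varphi = \tr \circ \pr{a}$, whence $\varphi = \tr' \circ \pr{a}$. There is no serious obstacle: the only real insight is the choice of twist --- post-composing with $\alpha$ (rather than, say, $\beta$ or $\alpha^{-1}$) is precisely what makes $\alpha\beta^{-1}|_R = \id_R$ cancel both the left and right twistings at once.
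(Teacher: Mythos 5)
Your proposal is correct and follows essentially the same route as the paper: both first invoke Lemma~\ref{lem:AR-identities} to get $\alpha|_R = \beta$ (hence $\alpha(R)=R$), and then strip the twist off the trace map. The only difference is that the paper delegates this second step to the citation \cite[Cor.~3.6]{PS16}, whereas you carry it out explicitly by replacing $\tr$ with $\alpha \circ \tr$ and verifying the bimodule property and \ref{T1}, \ref{T2} directly.
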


\begin{proof}
  Suppose $A$ is an $(\alpha,\beta)$-Frobenius extension of $R$ of degree $(-\lambda,\pi)$, with trace map $\tr$ and Nakayama isomorphism $\psi$.  Furthermore, suppose that $C_A(R) = A$.  Then, by Lemma~\ref{lem:AR-identities}, $A$ is an $(\alpha,\alpha)$-Frobenius extension of $R$ and $\alpha(R) = \beta(R) = R$.  The result then follows immediately from \cite[Cor.~3.6]{PS16}.
\end{proof}

For the remainder of this paper, we fix rings
\[
  R \subseteq B \subseteq A, \quad \text{with } C_A(R) = A.
\]
This implies $C_B(R) = B$ and that $C_R(R) = R$.  In particular, $R$ is supercommutative, and so we do not distinguish between left and right $R$-modules.  In light of Corollary~\ref{cor:R-central-implies-untwisted}, we suppose that $A$ and $B$ are untwisted Frobenius extensions of $R$ of degrees $(-\lambda_A,\pi_A)$ and $(-\lambda_B,\pi_B)$, respectively.  We denote their trace maps by $\tr_A$ and $\tr_B$ and their Nakayama isomorphisms by $\psi_A$ and $\psi_B$, respectively.  We call $A$ and $B$ \emph{nested Frobenius extensions} of $R$.

\begin{rem}
  The assumption that $C_A(R)=A$ implies that $\psi_A$ and $\psi_B$ are ring automorphisms of $A$ and $B$, respectively.  In fact, this is precisely why we assume $C_A(R)=A$.
\end{rem}

For an $R$-module $M$, we define
\[
  M^\vee = \HOM_R(M,R).
\]
If, in addition, $M$ is a $(B,A)$-bimodule, then it is straightforward to verify that $M^\vee$ is an $(A,B)$-bimodule with action given by
\begin{equation} \label{eq:dual-action}
  a \cdot f \cdot b
  = (-1)^{\bar a \bar f} f \circ \pr{a} \circ \pl{b}
  = (-1)^{\bar a \bar f + \bar a \bar b} f \circ \pl{b} \circ \pr{a},
  \quad a \in A,\ b \in B,\ f \in M^\vee.
\end{equation}

\details{
  For $a \in A$, $f \in M^\vee$, $m,n \in M$, and $r \in R$, we have
  \begin{multline*}
    (a \cdot f)(rm+n)
    = (-1)^{\bar a \bar f} f \circ \pr{a}(rm+n)
    = (-1)^{\bar a \bar f + \bar a (\bar r + \bar m)} f(rma) + (-1)^{\bar a \bar f + \bar a \bar n} f(na)
    \\
    = (-1)^{\bar a \bar f + \bar a (\bar r + \bar m) + \bar r \bar f} rf(ma) + (-1)^{\bar a \bar f + \bar a \bar n} f(na)
    \\
    = (-1)^{\bar a \bar f + \bar a \bar r + \bar r \bar f} rf \circ \pr{a}(m) + (-1)^{\bar a \bar f} f \circ \pr{a}(n)
    = (-1)^{\bar r (\bar a + \bar f)} r(a \cdot f)(m) + (a \cdot f)(n),
  \end{multline*}
  and so $a \cdot f \in M^\vee$.

  For $a, a_1, a_2 \in A$ and $f,g \in M^\vee$ such that $\bar f = \bar g$, we have
  \begin{multline*}
    a_1 \cdot (a_2 \cdot f)
    = (-1)^{\bar a_2 \bar f} a_1 \cdot (f \circ \pr{a_2})
    = (-1)^{\bar a_1 (\bar a_2 + \bar f) + \bar a_2 \bar f} f \circ \pr{a_2} \circ \pr{a_1}
    \\
    = (-1)^{(\bar a_1 + \bar a_2) \bar f} f \circ \pr{(a_1 a_2)}
    = (a_1 a_2) \cdot f,
  \end{multline*}
  and
  \begin{gather*}
    a \cdot (f + g)
    = (-1)^{\bar a \bar f} (f+g) \circ \pr{a}
    = (-1)^{\bar a \bar f} f \circ \pr{a} + (-1)^{\bar a \bar g} \circ \pr{a}
    = a \cdot f + a \cdot g,
    \\
    1_A \cdot f = f \circ \pr{1_A} = f.
  \end{gather*}
  If, in addition, $\bar a_1 = \bar a_2$, then
  \[
    (a_1 + a_2) \cdot f
    = (-1)^{\bar a_1 \bar f} f \circ \pr{(a_1+a_2)}
    = (-1)^{\bar a_1 \bar f} f \circ \pr{a_1} + (-1)^{\bar a_2 \bar f} f \circ \pr{a_2}
    = a_1 \cdot f + a_2 \cdot f.
  \]
  Thus $M^\vee$ is indeed a left $A$-module under the action \eqref{eq:dual-action}.

  On the other hand, for $b \in B$, $f \in M^\vee$, $m,n \in M$, and $r \in R$, we have
  \begin{multline*}
    (f \cdot b)(rm + n)
    = f \circ \pl{b} (rm+n)
    = f (brm + bn)
    = (-1)^{\bar r (\bar f + \bar b)} rf(bm) + f(bn)
    \\
    = (-1)^{\bar r (\bar f + \bar b)} rf \circ \pl{b}(m) + f \circ \pl{b}(n)
    = (-1)^{\bar r (\bar f + \bar b)} r(f \cdot b)(m) + (f \cdot b)(n).
  \end{multline*}
  Thus, $f \cdot b \in M^\vee$.

  For $b, b_1, b_2 \in B$, $f, g \in B$, we have
  \begin{gather*}
    (f + g) \cdot b
    = (f + g) \circ \pl{b}
    = f \circ \pl{b} + g \circ \pl{b}
    = f \cdot b + g \cdot b,
    \\
    f \cdot (b_1 + b_2)
    = f \circ \pl{(b_1 + b_2)}
    = f \circ \pl{b_1} + f \circ \pl{b_2}
    = f \cdot b_1 + f \cdot b_2,
    \\
    (f \cdot b_1) \cdot b_2
    = \left( f \circ \pl{b_1} \right) \cdot b_2
    = \left( f \circ \pl{b_1} \right) \circ \pl{b_2}
    = f \circ \pl{(b_1b_2)}
    = f \cdot (b_1 b_2),
    \\
    f \cdot 1_B
    = f \circ \pl{1_B}
    = f.
  \end{gather*}
  Thus $M^\vee$ is indeed a right $B$-module under the action \eqref{eq:dual-action}.

  Finally, to see that the left and right action commute, note that, for $a \in A$, $b \in B$, $f \in M^\vee$, and $m \in M$, we have
  \begin{multline*}
    ((a \cdot f) \cdot b)(m)
    = (-1)^{\bar{a}\bar{f}}((f \circ \pr{a}) \cdot b)(m)
    = (-1)^{\bar{a}\bar{f}}(f \circ \pr{a} \circ \pl{b})(m)
    = (-1)^{\bar{a}\bar{f}}(f \circ \pr{a})(bm)
    \\
    = (-1)^{\bar{a}\bar{f} + \bar{a}(\bar{b} + \bar{m})} f(bma)
    = (-1)^{\bar{a}(\bar{f} + \bar{b}) + \bar{a}\bar{m}} f \circ \pl{b}(ma)
    = (-1)^{\bar{a}(\bar{f} + \bar{b})} (f \cdot b) \circ \pr{a}(m)
    = (a \cdot (f \cdot b))(m).
  \end{multline*}
}

Note that $B$ is naturally a $(B,B)$-bimodule via left and right multiplication.  We denote this bimodule by ${}_BB_B$ to emphasize the actions.  Therefore, if $M$ is a $(B,A)$-bimodule, then $\HOM_B(M,{}_BB_B)$ is an $(A,B)$-bimodule via the actions \eqref{eq:left-hom-action} and \eqref{eq:right-hom-action}.

\begin{lem}
  For any $(B,A)$-bimodule $M$, the map
  \begin{equation} \label{eq:dual-isom}
    \HOM_B \left( {}^{\psi_B}M, {}_BB_B \right) \to M^\vee,\quad f \mapsto \tr_B \circ f,
  \end{equation}
  is a homomorphism of $(A,B)$-bimodules of degree $(-\lambda_B,\pi_B)$.
\end{lem}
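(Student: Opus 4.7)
The plan is to verify directly that the map $\Phi \colon f \mapsto \tr_B \circ f$ satisfies four properties: it lands in $M^\vee$, is homogeneous of degree $(-\lambda_B,\pi_B)$, is left $A$-linear up to the expected sign, and is right $B$-linear. Only the last of these requires any real input beyond definitions.

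First, to see that $\Phi(f)$ actually lies in $M^\vee = \HOM_R(M,R)$, I would invoke Lemma~\ref{lem:AR-identities} applied to the extension $R \subseteq B$: since $C_B(R) = B$, we have $\psi_B|_R = \id_R$, so the left $R$-action on ${}^{\psi_B}M$ agrees with the original left $R$-action on $M$. Combined with the fact that $\tr_B$ is an $(R,R)$-bimodule map of $\Z_2$-degree $\pi_B$, a routine sign check shows $\tr_B \circ f$ is $R$-linear. For the degree, since $\tr_B$ is homogeneous of degree $(-\lambda_B,\pi_B)$, postcomposing with $\tr_B$ shifts the degree of $f$ by exactly $(-\lambda_B,\pi_B)$, so $\Phi$ itself has degree $(-\lambda_B,\pi_B)$.

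Left $A$-linearity is essentially formal. The left $A$-action on both $\HOM_B({}^{\psi_B}M, {}_BB_B)$, via \eqref{eq:left-hom-action}, and on $M^\vee$, via \eqref{eq:dual-action}, is induced by precomposition with $\pr{a}$. Postcomposing by $\tr_B$ commutes with precomposition by $\pr{a}$, and a direct comparison shows $\Phi(a \cdot f) = (-1)^{\pi_B \bar a} a \cdot \Phi(f)$, which is precisely what is required of a left $A$-module map of $\Z_2$-degree $\pi_B$.

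The substantive step is right $B$-linearity. Using \eqref{eq:right-hom-action}, the right $B$-action on $\HOM_B({}^{\psi_B}M, {}_BB_B)$ unfolds to $(f \cdot b)(m) = (-1)^{\bar b \bar m} f(m) b$, whereas \eqref{eq:dual-action} gives $(\Phi(f) \cdot b)(m) = \tr_B(f(bm))$. To match these, I would first use the $\psi_B$-twisted left $B$-linearity of $f$ on ${}^{\psi_B}M$ to rewrite $f(bm) = (-1)^{\bar f \bar b}\, \psi_B^{-1}(b) f(m)$, and then apply the defining relation of the Nakayama isomorphism, $\tr_B(cx) = (-1)^{\bar c \bar x} \tr_B(x \psi_B(c))$, with $c = \psi_B^{-1}(b)$, to move the element past the trace; this converts $\psi_B^{-1}(b)$ back to $b$ on the right of $f(m)$. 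The accumulated signs collapse to $(-1)^{\bar b \bar m}$, matching the other side. The only mildly delicate point is the sign bookkeeping; conceptually, twisting $M$ on the left by $\psi_B$ is precisely the adjustment needed to absorb the Nakayama twist of $\tr_B$.
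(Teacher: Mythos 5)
Your proposal is correct and follows essentially the same route as the paper: membership in $M^\vee$ via $\psi_B|_R = \id_R$ from Lemma~\ref{lem:AR-identities}, the degree and left $A$-linearity read off formally from the definitions, and right $B$-linearity established by combining the twisted left $B$-linearity of $f$ with the defining relation of the Nakayama isomorphism. The paper runs the right $B$-linearity computation in the opposite direction (starting from $\tr_B \circ (f \cdot b)$), but the ingredients and sign bookkeeping are identical.
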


\begin{proof}
  By Lemma~\ref{lem:AR-identities}, we have $\psi_B(r) = r$ for all $r \in R$.  Thus, any element $f \in \HOM_B \left( {}^{\psi_B}M, B \right)$ is also an element of $\HOM_R(M,B)$, and hence $\tr_B \circ f \in M^\vee$.  The map \eqref{eq:dual-isom} is also clearly of degree $(-\lambda_B,\pi_B)$, since $\tr_B$ is.

  It remains to show that \eqref{eq:dual-isom} is a homomorphism of $(A,B)$-bimodules.  It is clearly a homomorphism of abelian groups.  For $a \in A$ and $f \in \HOM_B \left( {}^{\psi_B}M, B \right)$, we have
  \[
    \tr_B \circ (a \cdot f)
    = (-1)^{\bar{a} \bar{f}} \tr_B \circ f \circ \pr{a}
    = (-1)^{\bar{a} \pi_B} a \cdot (\tr_B \circ f).
  \]
  Thus, \eqref{eq:dual-isom} is a homomorphism of left $A$-modules.  Now let $b \in B$ and $y \in \prescript{\psi_B}{B}M_A$.  Then
  \begin{align*}
    \tr_B \circ (f \cdot b)(y) &= (-1)^{\bar{b}\bar{f}} \tr_B \circ (\pr{b} \circ f)(y) \\
    &= (-1)^{\bar{b}\bar{f}} \tr_B \circ \big( \pr{b}(f(y)) \big) \\
    &= (-1)^{\bar{b}\bar{y}} \tr_B (f(y)b) \\
    &= (-1)^{\bar{b}\bar{f}} \tr_B \big( \psi_B^{-1}(b) f(y) \big) \\
    &= \tr_B(f(by)) \\
    &= \tr_B \circ f \circ \pl{b}(y) \\
    &= \big( (\tr_B \circ f) \cdot b \big)(y)
  \end{align*}
  Thus the map~\eqref{eq:dual-isom} is also a homomorphism of right $B$-modules.
\end{proof}

Let
\[
  \{x_i\}_{i=1}^n \quad \text{and} \quad \{y_i\}_{i=1}^n
\]
be dual sets of generators of $B$ over $R$, where $|x_i| + |y_i| = \lambda_B$ and $\bar x_i + \bar y_i = \pi_B$, for each $i =1,\dotsc,n$ (see Proposition~\ref{prop:dual-sets-of-generators}).

\begin{prop}
  If $M$ is a $(B,A)$-bimodule, then the map
  \begin{equation} \label{eq:dual-isom-inverse}
    M^\vee \to \HOM_B({}^{\psi_B}M, {}_BB_B),\quad
    \theta \mapsto \left( m \mapsto (-1)^{\pi_B(\bar \theta + \bar m)} \sum_{i=1}^n (-1)^{\bar y_i (\pi_B + \bar m)} \theta(y_im) x_i \right),
  \end{equation}
  is a homomorphism of $(A, B)$-bimodules of degree $(\lambda_B, \pi_B)$.  Moreover, the maps \eqref{eq:dual-isom} and \eqref{eq:dual-isom-inverse} are mutually inverse isomorphisms of $(A, B)$-bimodules.
\end{prop}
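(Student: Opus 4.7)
The plan is to show that the displayed map (call it $\Phi$) is a two-sided set-theoretic inverse of the $(A,B)$-bimodule homomorphism \eqref{eq:dual-isom}. Once this is established, $\Phi$ automatically inherits the $(A,B)$-bimodule structure, and its degree must be $(\lambda_B, \pi_B)$ as the inverse of a homomorphism of degree $(-\lambda_B, \pi_B)$. The degree count is also immediate from the formula: each summand $\theta(y_im)x_i$ shifts the grading by $(|y_i|+|x_i|, \bar y_i + \bar x_i) = (\lambda_B, \pi_B)$.

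For the composition $\Phi \circ (\tr_B \circ -) = \id$, I would start with $f \in \HOM_B({}^{\psi_B}M, {}_BB_B)$ and $\theta := \tr_B \circ f$, and apply the first identity of \eqref{eq:dual-bases} to $f(m) \in B$ to get
\[
  f(m) = (-1)^{\pi_B(\bar f + \bar m)} \sum_i (-1)^{\pi_B \bar x_i} \tr_B(f(m)y_i)\, x_i.
\]
The twisted $B$-linearity of $f$ gives $f(y_im) = (-1)^{\bar f \bar y_i}\psi_B^{-1}(y_i)f(m)$, and then the Nakayama defining identity yields $\theta(y_i m) = (-1)^{\bar m \bar y_i}\tr_B(f(m) y_i)$. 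Substituting back and reducing via $\bar f = \bar\theta + \pi_B$ together with $\pi_B \bar x_i \equiv \pi_B + \pi_B \bar y_i \pmod 2$ recovers exactly the defining formula of $\Phi(\theta)$. In particular, this shows $\Phi(\theta)$ lies in $\HOM_B({}^{\psi_B}M, {}_BB_B)$, so $\Phi$ is well-defined.

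For the reverse composition $(\tr_B \circ -) \circ \Phi = \id$, I would use that $\theta(y_i m) \in R$ is central in $B$ (since $C_A(R) = A$). Pulling this scalar through $\tr_B$ via supercommutativity and the $R$-bilinearity of $\tr_B$ turns the sum $\sum_i (\pm)\tr_B(\theta(y_im)x_i)$ into $\sum_i (\pm)\tr_B(x_i)\theta(y_im)$. On the other side, applying the second identity of \eqref{eq:dual-bases} with $b = 1_B$ yields $1_B = \sum_i y_i \tr_B(x_i)$, and then expanding $\theta(m) = \theta(1_B m)$ via $R$-linearity of $\theta$ and supercommutativity of $\tr_B(x_i) \in R$ gives the same sum, collapsing to $\theta(m)$.

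The main obstacle is the sign bookkeeping: in the forward direction, the three sources of signs — the twisted $B$-linearity of $f$, the Nakayama identity, and the parity reduction $\pi_B\bar x_i \equiv \pi_B + \pi_B\bar y_i \pmod 2$ — must combine to give exactly the prefactor $(-1)^{\pi_B(\bar\theta + \bar m) + \bar y_i(\pi_B + \bar m)}$ appearing in \eqref{eq:dual-isom-inverse}; in the reverse direction, the signs from supercommutativity, $R$-linearity of $\tr_B$, and $R$-linearity of $\theta$ must align with the prefactor to reproduce $\theta(m)$.
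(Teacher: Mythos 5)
Your overall strategy---compute both compositions, then let the map \eqref{eq:dual-isom-inverse} (your $\Phi$) inherit the $(A,B)$-bimodule structure and the degree $(\lambda_B,\pi_B)$ from being the set-theoretic inverse of \eqref{eq:dual-isom}---is the same as the paper's, and the two composition computations you sketch are correct and match the paper's. However, there is a genuine gap in how you establish that $\Phi$ is well-defined, i.e.\ that $\Phi(\theta)$ actually lies in $\HOM_B({}^{\psi_B}M,{}_BB_B)$ for an \emph{arbitrary} $\theta\in M^\vee$. You deduce this from the identity $\Phi(\tr_B\circ f)=f$, but that computation uses the twisted $B$-linearity of $f$, so it only covers those $\theta$ lying in the image of \eqref{eq:dual-isom}---and surjectivity of \eqref{eq:dual-isom} is part of what is being proved. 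The other composition does not rescue you: $\tr_B\circ\Phi(\theta)=\theta$ is a valid pointwise identity for any $\theta$, but it only witnesses surjectivity of \eqref{eq:dual-isom} once you already know $\Phi(\theta)$ is twisted $B$-linear. As written, the argument is circular.

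The missing step is a direct verification, for arbitrary $\theta\in M^\vee$, $b\in B$, and $m\in M$, that
\[
  \Phi(\theta)\big(\psi_B(b)m\big) = (-1)^{\bar b(\bar\theta+\pi_B)}\, b\,\Phi(\theta)(m),
\]
and this is in fact the longest computation in the paper's proof. It cannot be derived from the two inverse identities; it uses the dual-basis relations \eqref{eq:dual-bases} twice in an essential way: expand $y_i\psi_B(b)=\sum_j y_j\tr_B\big(x_jy_i\psi_B(b)\big)$ by the second identity, pull the scalars $\tr_B\big(x_jy_i\psi_B(b)\big)\in R$ out through $\theta$, rewrite $\tr_B\big(x_jy_i\psi_B(b)\big)=\pm\tr_B(bx_jy_i)$ via the Nakayama identity, and finally contract $\sum_i\pm\tr_B(bx_jy_i)x_i=\pm\, bx_j$ by the first identity. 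Once you add this verification (with the attendant sign bookkeeping), the rest of your outline goes through.
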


\begin{proof}
  The map
  \begin{equation} \label{eq:dual-isom-inverse-codomain}
    m \mapsto (-1)^{\pi_B(\bar \theta + \bar m)} \sum_{i=1}^n (-1)^{\bar y_i (\pi_B + \bar m)} \theta(y_im) x_i
  \end{equation}
  is clearly a homomorphism of abelian groups.  Now let $b \in B$ and $m \in {}^{\psi_B}M$.  Then $b \cdot m = \psi_B(b)m$ maps to
  \begin{align*}
    &(-1)^{\pi_B(\bar \theta + \bar b + \bar m)} \sum_{i = 1}^n (-1)^{\bar y_i (\pi_B + \bar b + \bar m)} \theta \big( y_i\psi_B(b)m \big) x_i \\
    &\stackrel{\eqref{eq:dual-bases}}{=} (-1)^{\pi_B(\bar \theta + \bar b + \bar m)} \sum_{i = 1}^n (-1)^{\bar y_i (\pi_B + \bar b + \bar m)} \theta \left(\sum_{j = 1}^n y_j \tr_B \big( x_j y_i \psi_B(b) \big) m \right) x_i \\
    &= (-1)^{\pi_B(\bar \theta + \bar b + \bar m)} \sum_{i, j = 1}^n (-1)^{\bar y_i (\pi_B + \bar b + \bar m) + \bar{y_j}(\pi_B + \bar{x}_j + \bar{y}_i + \bar{b})} \theta \Big( \tr_B \big( x_j y_i \psi_B(b) \big) y_j m \Big) x_i \\
    &= (-1)^{\pi_B(\bar \theta + \bar b + \bar m)} \sum_{i, j = 1}^n (-1)^{\bar y_i (\pi_B + \bar b + \bar m) + (\bar{y}_j + \bar{\theta})(\pi_B + \bar{x}_j + \bar{y}_i + \bar{b})} \tr_B \big( x_j y_i \psi_B(b) \big) \theta(y_j m)x_i \\
    &= (-1)^{\pi_B (\bar \theta + \bar b)} \sum_{i, j = 1}^n (-1)^{\bar y_i (\pi_B + \bar b) + \bar m (\bar x_j + \bar b)} \theta(y_jm)\tr_B(x_j y_i \psi_B(b))x_i \\
    &= (-1)^{\pi_B (\bar \theta + \bar b)} \sum_{i, j = 1}^n (-1)^{\bar y_i \pi_B + \bar m (\bar x_j + \bar b) + \bar b \bar x_j} \theta(y_jm)\tr_B(bx_jy_i)x_i \\
    &\stackrel{\eqref{eq:dual-bases}}{=} (-1)^{\pi_B (\bar \theta + \bar b)} \sum_{j = 1}^n (-1)^{\pi_B \bar y_j + \bar m (\bar x_j + \bar b) + \bar b \bar y_j} \theta(y_jm) b x_j \\
    &= (-1)^{\bar b (\bar \theta + \pi_B) + \pi_B \bar \theta} \sum_{j = 1}^n (-1)^{\pi_B \bar y_j + \bar m \bar x_j} b \theta(y_jm) x_j \\
    &= (-1)^{\bar b (\bar \theta + \pi_B)} b \left( (-1)^{\pi_B(\bar \theta + \bar m)} \sum_{j = 1}^n (-1)^{\bar y_j (\pi_B + \bar m)} \theta(y_jm) x_j \right).
  \end{align*}
  Thus \eqref{eq:dual-isom-inverse-codomain} is a homomorphism of left $B$-modules of degree $(\lambda_B,\pi_B)$.  Since the (set-theoretic) inverse of a bimodule homomorphism is also a bimodule homomorphism, it remains to show that \eqref{eq:dual-isom} and \eqref{eq:dual-isom-inverse} are mutually inverse.

  Let $f \in \HOM_B({}^{\psi_B}M, {}_BB_B)$. The map~\eqref{eq:dual-isom} followed by the map~\eqref{eq:dual-isom-inverse} sends $f$ to the map
  \begin{align*}
    m
    &\mapsto (-1)^{\pi_B(\pi_B + \bar f + \bar m)} \sum_{i=1}^n (-1)^{\bar y_i (\pi_B + \bar m)} \tr_B \big( f(y_im) \big) x_i \\
    &= (-1)^{\pi_B(\pi_B + \bar f + \bar m)} \sum_{i=1}^n (-1)^{\bar y_i (\pi_B + \bar m + \bar f)} \tr_B \big( \psi_B^{-1}(y_i) f(m) \big) x_i \\
    &= (-1)^{\pi_B(\bar f + \bar m)} \sum_{i=1}^n (-1)^{\pi_B \bar x_i} \tr_B \big( f(m) y_i \big) x_i \\
    &\stackrel{\eqref{eq:dual-bases}}{=} f(m).
  \end{align*}
  Thus \eqref{eq:dual-isom-inverse} is left inverse to \eqref{eq:dual-isom}.

  Now let $\theta \in M^\vee$.  The map \eqref{eq:dual-isom-inverse} followed by the map \eqref{eq:dual-isom} sends $\theta$ to the map
  \begin{align*}
    m
    &\mapsto (-1)^{\pi_B(\bar \theta + \bar m)} \sum_{i=1}^n (-1)^{\bar y_i (\pi_B + \bar m)} \tr_B \big( \theta(y_im) x_i \big) \\
    &= \sum_{i=1}^n (-1)^{\bar y_i \bar m} \theta(y_i m) \tr_B (x_i) \\
    &= \sum_{i=1}^n (-1)^{\bar y_i (\bar \theta + \bar y_i)} \tr_B (x_i) \theta(y_im) \\
    &= \sum_{i=1}^n (-1)^{\bar y_i} \theta \big( \tr_B (x_i) y_i m \big) \\
    &= \sum_{i=1}^n \theta \big( y_i \tr_B (x_i) m \big) \\
    &= \theta \left( \sum_{i=1}^n y_i \tr_B (x_i) m \right) \\
    &\stackrel{\eqref{eq:dual-bases}}{=} \theta(m).
  \end{align*}
  Hence \eqref{eq:dual-isom-inverse} is also right inverse to \eqref{eq:dual-isom}.
\end{proof}

We will let
\[
  \kappa \colon \left( {}_BA^{\psi_A}_A \right)^\vee \xrightarrow{\cong} \HOM_B \left( \prescript{\psi_B}{B}A^{\psi_A}_A, {}_BB_B \right)
\]
be the special case of the isomorphism~\eqref{eq:dual-isom-inverse} of $(A, B)$-bimodules where one takes $M$ to be ${}_BA^{\psi_A}_A$.

\begin{prop}
  The map
  \[
    \varphi_A \colon {}_A A_B \to \left( \prescript{}{B} A^{\psi_A}_A \right)^\vee, \quad \varphi_A(a) = \tr_A \circ \pr{\psi_A(a)},
  \]
  is an isomorphism of $(A, B)$-bimodules of degree $(-\lambda_A, \pi_A)$.
\end{prop}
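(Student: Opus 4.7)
My plan is to simplify $\varphi_A$ using the Nakayama relation, verify its bimodule equivariance, and then invert it using dual sets of generators for $A$ over $R$.

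First, I would apply the defining relation $\tr_A(ax) = (-1)^{\bar a \bar x}\tr_A(x\psi_A(a))$ for $\psi_A$ (which is defined on all of $A$ since $C_A(R)=A$) to rewrite
\[
\varphi_A(a)(x) = (-1)^{\bar a \bar x}\tr_A\bigl(x\psi_A(a)\bigr) = \tr_A(ax),
\]
so that $\varphi_A(a) = \tr_A \circ \pl{a}$. Because Corollary~\ref{cor:R-central-implies-untwisted} ensures that $\tr_A$ is $(R,R)$-bilinear, this places $\varphi_A(a)$ inside $\HOM_R(A,R) = \bigl(\prescript{}{B}A^{\psi_A}_A\bigr)^\vee$; its degree is $(|a|-\lambda_A,\bar a+\pi_A)$, so $\varphi_A$ is homogeneous of degree $(-\lambda_A,\pi_A)$ as claimed.

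Next, I would check bimodule equivariance. Right $B$-equivariance is immediate from $\varphi_A(a)(x) = \tr_A(ax)$: $(\varphi_A(a)\cdot b)(x) = \varphi_A(a)(bx) = \tr_A(abx) = \varphi_A(ab)(x)$. For left $A$-equivariance, i.e.\ $\varphi_A(aa') = (-1)^{\pi_A\bar a}\,a\cdot\varphi_A(a')$, unwinding \eqref{eq:dual-action} yields $(a\cdot\varphi_A(a'))(x) = (-1)^{\bar a(\pi_A+\bar{a'})+\bar a\bar x}\tr_A(a'x\psi_A(a))$, and applying the Nakayama relation once more to move $\psi_A(a)$ across $a'x$ cancels the two factors of $(-1)^{\bar a\bar x}$ and yields $(-1)^{\pi_A\bar a}\tr_A(aa'x)$, exactly as required. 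This is the same sign-cancellation pattern used in the argument for $\kappa$.

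For invertibility, I would apply Proposition~\ref{prop:dual-sets-of-generators} to the untwisted Frobenius extension $A/R$ to obtain dual sets of generators $\{u_i\},\{v_i\}\subseteq A$ with $|u_i|+|v_i|=\lambda_A$ and $\bar u_i+\bar v_i=\pi_A$, and then define
\[
\Psi_A(\theta) = (-1)^{\pi_A \bar\theta}\sum_i (-1)^{\pi_A\bar u_i}\,\theta(v_i)\,u_i,
\]
the sign choice being dictated by the first identity in \eqref{eq:dual-bases}. Then $\Psi_A(\varphi_A(a)) = a$ is exactly that first dual basis identity applied to $a$, using $\varphi_A(a)(v_i) = \tr_A(av_i)$. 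Conversely, $\varphi_A(\Psi_A(\theta))(x)$ can be reduced, via the $R$-linearity of $\theta$ and the supercommutativity of $R$ inside $A$, to $\theta\bigl(\sum_i v_i\tr_A(u_i x)\bigr)$, which equals $\theta(x)$ by the second identity in \eqref{eq:dual-bases}. The principal obstacle is, as elsewhere in this section, the careful bookkeeping of super signs; no structural input beyond the Frobenius data of $A/R$ and the Nakayama property is needed.
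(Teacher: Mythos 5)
Your proof is correct and reaches the same conclusion, but it handles the bijectivity differently from the paper. The paper verifies the bimodule equivariances directly on $\varphi_A(a)=\tr_A\circ\pr{\psi_A(a)}$ and then proves injectivity and surjectivity abstractly: injectivity from \ref{T1}, and surjectivity by observing that any $\varphi\in\left(\prescript{}{B}A^{\psi_A}_A\right)^\vee$ lies in $\HOM_R({}_RA,R)$, invoking \ref{T2} to write $\varphi=\tr_A\circ\pr{a}$, and using that $\psi_A$ is a ring automorphism to exhibit a preimage. You instead first simplify $\varphi_A(a)(x)=\tr_A(ax)$ via the Nakayama relation (legitimate, since $C_A(R)=A$ makes $\psi_A$ defined on all of $A$), which genuinely streamlines the right $B$- and left $A$-equivariance checks, and then build an explicit inverse $\Psi_A$ from dual sets of generators of $A$ over $R$ (Proposition~\ref{prop:dual-sets-of-generators}), in the spirit of the paper's construction of \eqref{eq:dual-isom-inverse}. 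Both routes use equivalent input (dual generators exist precisely because $A$ is a Frobenius extension of $R$); yours buys an explicit formula for $\varphi_A^{-1}$, while the paper's avoids the extra sign bookkeeping. One small caveat: with your stated sign $(-1)^{\pi_A\bar\theta}$, one gets $\Psi_A(\varphi_A(a))=(-1)^{\pi_A}a$ rather than $a$ (since $\overline{\varphi_A(a)}=\bar a+\pi_A$), and likewise $\varphi_A(\Psi_A(\theta))=(-1)^{\pi_A}\theta$; so the true inverse is $(-1)^{\pi_A}\Psi_A$. This constant sign does not affect bijectivity or any other claim, but the identities as you state them should be adjusted accordingly.
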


\begin{proof}
  The map $\varphi_A$ is clearly a homomorphism of abelian groups.  Let $r \in R$, $a \in A$, and $x \in \prescript{}{A}A^{\psi_A}_B$.  Then
  \begin{multline*}
    \varphi_A(a)(rx)
    = \tr_A \circ \pr{\psi_A(a)}(rx) = (-1)^{\bar{a}(\bar{r} + \bar{x})} \tr_A \big( rx\psi_A(a) \big) \\
    = (-1)^{\bar{a}(\bar{r} + \bar{x}) + \pi_A \bar{r}} r\tr_A(x\psi_A(a))
    = (-1)^{\bar{r}(\bar{a} + \pi_A)} r\tr_A \circ \pr{\psi_A(a)}(x)
    = (-1)^{\bar{r}(\bar{a} + \pi_A)}r\varphi_A(a)(x).
  \end{multline*}
  Thus, $\varphi_A(a) \in \left( \prescript{}{B} A^{\psi_A}_A \right)^\vee$.

  Now, for $a, a' \in A$ and $x \in \prescript{}{A}A^{\psi_A}_B$, we have
  \begin{align*}
    \varphi_A(a'a)(x) &= \tr_A \circ \pr{\psi_A(a'a)}(x) \\
    &= (-1)^{\bar{x}(\bar{a}' + \bar{a})} \tr_A(x\psi_A(a'a)) \\
    &= (-1)^{\bar{x}(\bar{a}' + \bar{a})} \tr_A(x\psi_A(a')\psi_A(a)) \\
    &= (-1)^{\bar{a}'(\bar x + \bar a)} \tr_A \circ \pr{\psi_A(a)}(x \psi_A(a')) \\
    &= (-1)^{\bar{a}'(\bar{x} + \bar{a})} \varphi_A(a)(x \psi_A(a')) \\
    &= (-1)^{\bar{a}' \bar{a}} \varphi_A(a) \circ \pr{\psi_A(a')}(x) \\
    &= (-1)^{\bar{a}' \pi_A} \big( a' \cdot \varphi_A(a) \big)(x).
  \end{align*}
  Thus $\varphi_A$ is a homomorphism of left $A$-modules of degree $(-\lambda_A,\pi_A)$.

  On the other hand, for $a \in A$, $b \in B$, and $x \in \prescript{}{A}A^{\psi_A}_B$, we have
  \begin{align*}
    \varphi_A(ab)(x)
    &= \tr_A \circ \pr{\psi_A(ab)}(x) \\
    &= (-1)^{(\bar{a} + \bar{b})\bar{x}} \tr_A \big( x\psi_A(ab) \big) \\
    &= (-1)^{(\bar{a} + \bar{b})\bar{x}} \tr_A \big( x\psi_A(a)\psi_A(b) \big) \\
    &= (-1)^{\bar{a} (\bar{x} + \bar{b})} \tr_A \big( bx\psi_A(a) \big) \\
    &= \tr_A \circ \pr{\psi_A(a)}(bx) \\
    &= \tr_A \circ \pr{\psi_A(a)} \circ \pl{b}(x) \\
    &= (\varphi_A(a) \cdot b)(x).
  \end{align*}
  Thus $\varphi_A$ is a homomorphism of right $B$-modules.

  It remains to show that $\varphi_A$ is an isomorphism.  Suppose $\varphi(a) = \varphi(a')$ for some $a,a' \in A$.  This implies $\bar a = \bar a'$.  Then, for all $x \in \prescript{}{A}A^{\psi_A}_A$, we have
  \begin{gather*}
    \varphi(a)(x) = \varphi(a')(x) \\
    \implies \tr_A \circ \pr{a}(x) = \tr_A \circ \pr{a'}(x) \\
    \implies (-1)^{\bar{a}\bar{x}} \tr_A \big( x\psi_A(a) \big) = (-1)^{\bar{a}'\bar{x}} \tr_A \big( x\psi_A(a') \big) \\
    \implies 0 = (-1)^{\bar{a}\bar{x}} \tr_A \Big( x \big( \psi_A(a) - \psi_A(a') \big) \Big).
  \end{gather*}
  It thus follows from \ref{T1} that $\psi_A(a) = \psi_A(a')$, and hence $a = a'$.  Thus $\varphi_A$ is injective.

  Now, every element $\varphi \in \left( \prescript{}{B} A^{\psi_A}_A \right)^\vee$ can be viewed as an element of $\HOM_R({}_R A, R)$.  Then, by \ref{T2}, there exists an $a \in A$ such that $\varphi = \tr_A \circ \pr{a}$.  Since $\psi_A$ is a ring isomorphism, we have
  \[
    \tr_A \circ \pr{\psi_A(\psi_A^{-1}(a))} = \tr_A \circ \pr{a} = \varphi.
  \]
  Thus, $\varphi_A$ is surjective.
\end{proof}

\begin{prop} \label{prop:frobenius-extension-L2}
  The map
  \[
    \kappa \circ \varphi_A \colon {}_A A_B \to \HOM_B \left( \prescript{\psi_B}{B}A^{\psi_A}_A, \prescript{}{B}B_B \right)
  \]
  is an isomorphism of $(A, B)$-bimodules of degree $(\lambda_B - \lambda_A, \pi_A + \pi_B)$.
  Moreover, the map
  \[
    \tr \colon \prescript{\psi_B}{B}A^{\psi_A}_B \to \prescript{}{B}B_B, \quad \tr(a) = (-1)^{\pi_B (\pi_A + \bar a)} \sum_{i = 1}^n (-1)^{\bar y_i (\pi_B + \bar a)} \tr_A(y_i  a)x_i
  \]
  is a trace map, i.e., it satisfies conditions \ref{T1} and \ref{T2}.
\end{prop}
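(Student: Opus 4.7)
The first assertion is immediate: $\varphi_A$ and $\kappa$ have each been shown to be an isomorphism of $(A,B)$-bimodules, of degrees $(-\lambda_A, \pi_A)$ and $(\lambda_B, \pi_B)$ respectively, so their composition is an isomorphism of the claimed bidegree.

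The key to the second assertion is the identity
\[
  (\kappa \circ \varphi_A)(\psi_A^{-1}(a)) = \tr \circ \pr{a}
\]
as elements of $\HOM_B \left( \prescript{\psi_B}{B} A^{\psi_A}_A, \prescript{}{B} B_B \right)$, valid for every homogeneous $a \in A$. I would verify this by direct substitution: plug $\theta = \varphi_A(\psi_A^{-1}(a)) = \tr_A \circ \pr{a}$, of $\Z_2$-degree $\pi_A + \bar a$, into the formula~\eqref{eq:dual-isom-inverse} defining $\kappa$, evaluate at an arbitrary $x \in A$, expand $\tr_A \circ \pr{a}(y_i x) = (-1)^{\bar a (\bar y_i + \bar x)} \tr_A(y_i x a)$, and check that the signs collapse (the $\bar a \bar y_i$ term combines with $\bar y_i \pi_B + \bar y_i \bar x$ into $\bar y_i(\pi_B + \bar x + \bar a)$, while the remaining $\bar a \bar x$ factor pulls out) to give precisely $(-1)^{\bar a \bar x}\tr(xa) = \tr \circ \pr{a}(x)$. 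This is pure sign bookkeeping and is the only real computation in the proof.

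Granting this identity, the conditions~\ref{T1} and~\ref{T2} drop out formally from the bijectivity of $\kappa \circ \varphi_A$. For~\ref{T2}, any $\varphi \in \HOM_B \left( \prescript{\psi_B}{B} A, \{\lambda_A - \lambda_B, \pi_A + \pi_B\} \prescript{}{B}B \right)$ equals $(\kappa \circ \varphi_A)(a')$ for a unique $a' \in A$ by surjectivity, and setting $a = \psi_A(a')$ yields $\varphi = \tr \circ \pr{a}$. For~\ref{T1}, if $\tr(Aa) = 0$ for some homogeneous $a \in A$, then $\tr \circ \pr{a} = 0$ as a map on $A$, so $(\kappa \circ \varphi_A)(\psi_A^{-1}(a)) = 0$, and injectivity forces $a = 0$. (As a byproduct, specializing the identity to $a = 1_A$ realizes $\tr$ itself as $(\kappa \circ \varphi_A)(1_A)$, giving the bimodule structure claimed in the statement essentially for free.)

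The main, and essentially only, obstacle is the sign verification establishing $(\kappa \circ \varphi_A)(\psi_A^{-1}(a)) = \tr \circ \pr{a}$; once it is in place, both Frobenius conditions follow automatically from the bimodule isomorphism of the first part.
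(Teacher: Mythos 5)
Your proof is correct and follows essentially the same route as the paper: both hinge on the bimodule isomorphism $\kappa \circ \varphi_A$ and on identifying $\tr$ with the image of $1_A$ under it. The only difference is that the paper, after computing $(\kappa\circ\varphi_A)(1_A)=\tr$, delegates the verification of \ref{T1} and \ref{T2} to the general criterion \cite[Prop.~4.1]{PS16}, whereas you inline that criterion by proving the identity $(\kappa\circ\varphi_A)(\psi_A^{-1}(a))=\tr\circ\pr{a}$ for all $a$ and deducing \ref{T1} from injectivity and \ref{T2} from surjectivity — your sign computation checks out.
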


\begin{proof}
  Since $\kappa \circ \varphi_A$ is a composition of $(A, B)$-bimodule isomorphisms, it too is an $(A, B)$-bimodule isomorphism.  Now, for $a \in \prescript{}{A}A_B$, we have
  \begin{align*}
    (\kappa \circ \varphi_A)(1_A)(a)
    &= \Big( \kappa \big( \tr_A \circ \pr{\psi_A(1_A)} \big) \Big)(a) \\
    &= \big( \kappa (\tr_A) \big) (a) \\
    &= (-1)^{\pi_B (\pi_A + \bar a)} \sum_{i = 1}^n (-1)^{\bar y_i (\pi_B + \bar a)} \tr_A(y_i  a)x_i.
  \end{align*}
  Then by \cite[Prop.~4.1]{PS16}, $\tr$ is left trace map.
\end{proof}

\begin{theo} \label{theo:main-theorem}
  Let $A$ be a ring extension of $B$, and $B$ be a ring extension of $R$, with $C_A(R) = A$.  Suppose that $A$ is a Frobenius extension of $R$ of degree $(-\lambda_A, \pi_A)$, with Nakayama automorphism $\psi_A$, and that $B$ is a Frobenius extension of $R$ of degree $(-\lambda_B, \pi_B)$, with Nakayama automorphism $\psi_B$.   If $A$ is projective as a left $B$-module, then $A$ is a $(\psi_A, \psi_B)$-Frobenius extension of $B$ of degree $(\lambda_B - \lambda_A, \pi_B + \pi_A)$.  Moreover, the induction functor ${}_A A_B \otimes_B -$ is right adjoint to the shifted twisted restriction functor $\{\lambda_B - \lambda_A, \pi_B + \pi_A\} \prescript{\psi_B}{B}A^{\psi_A}_A \otimes_A -$.
\end{theo}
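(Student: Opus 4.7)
The plan is to assemble the structural results already established in this section and then deduce the adjunction from standard tensor-hom arguments.

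For the Frobenius extension claim, I first observe that $A$ is finitely generated as a left $B$-module: since $A$ is a Frobenius extension of $R$, it is finitely generated as a left $R$-module, and because $R \subseteq B$, any such generating set remains a $B$-generating set. Combined with the projectivity hypothesis, this verifies the finite generation and projectivity required in the definition of a twisted Frobenius extension. The existence of a $(B,B)$-bimodule homomorphism $\tr \colon \prescript{\psi_B}{B}A^{\psi_A}_B \to {}_BB_B$ of degree $(\lambda_B - \lambda_A, \pi_B + \pi_A)$ satisfying \ref{T1} and \ref{T2} is then exactly Proposition~\ref{prop:frobenius-extension-L2}, so the first assertion follows.

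For the adjunction, I would use the bimodule isomorphism $\kappa \circ \varphi_A$ of Proposition~\ref{prop:frobenius-extension-L2}. Absorbing its bidegree $(\lambda_B - \lambda_A, \pi_A + \pi_B)$ into a shift of the target yields a degree-zero $(A,B)$-bimodule isomorphism ${}_AA_B \cong \{\lambda_A - \lambda_B, \pi_A + \pi_B\} \HOM_B(\prescript{\psi_B}{B}A^{\psi_A}_A, {}_BB_B)$. The finite generation and projectivity of $A$ over $B$ then ensure that the evaluation map provides a natural isomorphism $\HOM_B(\prescript{\psi_B}{B}A^{\psi_A}_A, {}_BB_B) \otimes_B M \cong \HOM_B(\prescript{\psi_B}{B}A^{\psi_A}_A, M)$ for every left $B$-module $M$, so ${}_AA_B \otimes_B M \cong \{\lambda_A - \lambda_B, \pi_A + \pi_B\} \HOM_B(\prescript{\psi_B}{B}A^{\psi_A}_A, M)$ as left $A$-modules. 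Combining this identification with the standard tensor-hom adjunction between $\prescript{\psi_B}{B}A^{\psi_A}_A \otimes_A -$ and $\HOM_B(\prescript{\psi_B}{B}A^{\psi_A}_A, -)$, and using that a shift on one side of $\Hom$ corresponds to the opposite shift on the other, moves the shift $\{\lambda_B - \lambda_A, \pi_B + \pi_A\}$ onto the restriction functor and yields the stated adjunction.

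The main obstacle is careful bookkeeping of the grading shifts and parity conventions, particularly determining which side of the final adjunction the shift lands on; also requiring care is the verification that the natural evaluation isomorphism above respects the twisted $(A, B)$-bimodule structures present on both sides.
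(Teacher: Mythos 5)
Your proposal is correct and follows essentially the same route as the paper: finite generation of $A$ over $B$ is deduced from finite generation over $R$, and the trace map satisfying \ref{T1} and \ref{T2} is taken directly from Proposition~\ref{prop:frobenius-extension-L2}. The only difference is that for the adjunction the paper simply cites the general result \cite[Th.~6.2]{PS16}, whereas you sketch the standard tensor-hom argument underlying that citation; your sketch is the right one.
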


\begin{proof}
  Since $A$ is a Frobenius extension of $R$, it is finitely generated as an $R$-module, and hence also finitely generated as a left $B$-module.  Moreover, by Proposition~\ref{prop:frobenius-extension-L2}, there is a trace map satisfying \ref{T1} and \ref{T2}.  Thus $A$ is an $(\psi_A, \psi_B)$-Frobenius extension of $B$.  The final assertion follows from \cite[Th.~6.2]{PS16}.
\end{proof}

\begin{rem}
  Recall that, by Corollary~\ref{cor:R-central-implies-untwisted}, we would gain no generality in Theorem~\ref{theo:main-theorem} by allowing for $A$ and $B$ to be \emph{twisted} Frobenius extensions of $R$.  In the case that $R$ is a field, concentrated in degree $(0,0)$, Theorem~\ref{theo:main-theorem} recovers \cite[Cor.~7.4]{PS16}.
\end{rem}

%
\section{Applications} \label{sec:applications}
%

In this final section, we give several examples that illustrate Theorem~\ref{theo:main-theorem}.  In particular, we see that a number of results that have appeared in the literature follow immediately from this theorem.

\begin{eg}[Group rings]
  Let $R$ be a supercommutative ring, $G$ a finite group, and $H$ a subgroup of $G$.  Consider the following group rings over $R$:
  \[
    R \cong R[\{e\}] \subseteq R[H] \subseteq R[G],
  \]
  where $e$ is the identity element of $G$.  By construction, $R[H]$ and $R[G]$ are free as $R$-modules.  It is easy to verify that the map
  \[
    \tr \colon R[G] \to R,\quad \tr \left( \sum_{g \in G} r_g g \right) = r_e,
  \]
  satisfies \ref{T1} and \ref{T2} with $\alpha$ and $\beta$ both the identity map.  Thus $R[G]$ and $R[H]$ are both untwisted Frobenius extensions of $R$ and their Nakayama automorphisms are the identity automorphisms.  The ring $R$ clearly lies in the center of $R[G]$ and $R[G]$ is free as a left $R[H]$-module, with basis given by a set of left coset representatives.  Therefore, by Theorem~\ref{theo:main-theorem}, $R[G]$ is an untwisted Frobenius extension of $R[H]$.  In the case that $R$ is concentrated in degree $(0,0)$, this recovers the well-known result that a finite group ring is a Frobenius extension of a subgroup ring.
\end{eg}

\begin{eg}[Hopf algebras]
  Let $R$ be an unique factorization domain, let $H$ be a Hopf algebra over $R$ that is finitely generated and projective as an $R$-module, and let $K$ be a Hopf subalgebra of $H$.  Then $H$ and $K$ are both untwisted Frobenius extensions of $R$ by \cite[Cor.~1]{Par71}.  Let $\psi_H$ and $\psi_K$ denote their respective Nakayama automorphisms.  If $H$ is projective as a left $K$-module (this condition is automatically satisfied when $R$ is a field by \cite[Th.~7]{NZ89}), then $H$ is a $(\psi_H, \psi_K)$-Frobenius extension of $K$ by Theorem~\ref{theo:main-theorem}.  Moreover, we have that $H$ is an $(\id_H, \psi_K \circ \psi_H^{-1})$-Frobenius extension of $K$ by applying \cite[Proposition~3.4]{PS16} with $u = 1_H$.  That is, it is a Frobenius extension of the second kind.  Thus we recover the result \cite[Cor.~3.6(1)]{Sch92}.
\end{eg}

\begin{eg}[Nilcoxeter rings]
  Let $R$ be a supercommutative ring and fix a nonnegative integer $n$.  The nilcoxeter ring $N_n$ over $R$ is generated by the elements $u_1, \cdots ,u_{n - 1}$ with the relations
  \begin{gather*}
    u_i^2 = 0\text{ for } 1 \leq i \leq n - 1, \\
    u_i u _j = u_j u_i \text{ for } 1 \leq i, j \leq n - 1 \text{ such that } |i - j| > 1, \\
    u_i u_{i + 1} u_i = u_{i + 1} u_i u_{i + 1} \text{ for } 1 \leq i < n - 1.
  \end{gather*}
  As an $R$-module, $N_n$ has the basis $\{u_w \mid w \in S_n\}$, where $S_n$ is the symmetric group on $n$ elements.  Multiplication of basis elements is given by
  \[
    u_vu_w =
    \begin{cases}
      u_{vw} & \text{if } \ell(v + w) = \ell(v) + \ell(w), \\
      0 & \text{if } \ell(v + w) \neq \ell(v) + \ell(w),
    \end{cases}
  \]
  where $\ell$ is the length function of the symmetric group.  So $N_n$ is free and thus projective as an $R$-module.  Now consider the $R$-linear function determined by
  \[
    \tr_n \colon N_n \to R,\quad
    \tr_n(u_w) =
    \begin{cases}
      1 & \text{if } w = w_0 \in S_n, \\
      0 & \text{if } w \neq w_0 \in S_n,
    \end{cases}
  \]
  where $w_0$ denotes the permutation of maximal length in $S_n$.  It can be shown that $N_n$ is an untwisted Frobenius extension of $R$ of degree $(- {n \choose 2}, {n \choose 2})$ with trace map $\tr_n$, and the Nakayama automorphism associated to $\tr_n$ is given by $\psi_n(u_i) = u_{n - i}$; see \cite[Lem~8.2]{PS16}, where one replaces $\F$ with $R$.  Although the author of \cite[Prop~4]{Kho01} works over the field $\Q$, his proof that $N_n$ is projective as a left $N_{n - 1}$-module still holds over $R$.  It is clear that $C_{N_n}(R) = N_n$.  Therefore, by Theorem~\ref{theo:main-theorem}, $N_n$ is a $(\psi_{n}, \psi_{n - 1})$-Frobenius extension of $N_{n - 1}$ of degree $\left( {n - 1 \choose 2} - {n \choose 2}, {n \choose 2} + {n - 1 \choose 2} \right)$.
\end{eg}


\bibliographystyle{alphaurl}
\bibliography{PoonSavage-NestedFrobeniusExtensions}

\end{document}